\newtheorem{theo}{Theorem}[section]
\newtheorem{lem}[theo]{Lemma}
\newtheorem{claim}{Claim}
\newtheorem*{cl}{Claim}
\begin{document}

\title{A note on cycle lengths in graphs of chromatic number five and six}
\author{Qingyi Huo\thanks{School of Mathematical Sciences, University of Science and Technology of China, Hefei, Anhui 230026, China. Email: qyhuo@mail.ustc.edu.cn.
Partially supported by NSFC grant 11622110, National Key Research and Development Project SQ2020YFA070080, and Anhui Initiative in Quantum Information Technologies grant AHY150200.}
}
\date{}

\maketitle

\begin{abstract}
In this note, we prove that every non-complete $(k+1)$-critical graph contains cycles of all lengths modulo $k$, where $k=4,5$.
Together with a result in \cite{GHM20}, this completely gives an affirmative answer to the question of Moore and West on graphs of given chromatic number.
\end{abstract}

\section{Introduction}

The problem of deciding whether a given graph contains cycles of all lengths modulo a positive integer $k$ shows up in many literatures (see \cite{B77,CY,D10,Erd76,Fan02,GHLM,GHM20,LM,SV17,Th83,Th88,V00}).
Recently, Moore and West \cite[Question 2]{MW} asked whether every $(k+1)$-critical non-complete graph has a cycle of length $2$ modulo $k$. Here, a graph is $k$-{\it critical} if it has chromatic number $k$ but deleting any edge will decrease the chromatic number.
Very recently, Gao, Huo and Ma \cite{GHM20} partially answered this question by showing the following theorem.

\begin{theo}[\cite{GHM20} Theorem 1.4]\label{old}
For $k\geq 6$, every non-complete $(k+1)$-critical graph contains cycles of all lengths modulo $k$.
\end{theo}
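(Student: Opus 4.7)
The plan is to argue via a minimum counterexample: suppose $G$ is a non-complete $(k+1)$-critical graph with $k \geq 6$ that avoids some cycle length modulo $k$. I would first extract the standard structure of critical graphs: by Dirac's theorem $\delta(G) \geq k$; by Gallai's theorem $G$ is $2$-connected and the subgraph induced by the degree-$k$ vertices is a Gallai forest; and for every edge $uv$ the graph $G - uv$ admits a proper $k$-coloring in which $u$ and $v$ receive the same color, giving Kempe-chain control between the two color classes containing $u$ and $v$.

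The first step would be to locate a sufficiently long cycle $C$ in $G$, which follows easily from $\delta(G) \geq k \geq 6$ together with standard Dirac/Erd\H{o}s--Gallai type bounds. Then I would exploit chords of $C$: each chord partitions $C$ into two arcs and produces two cycles whose lengths sum to $|C| + 2$. By iterating this argument with many chords (abundant because $\delta(G)$ is large) one can realize cycles of all lengths in a long interval, and hence cycles of every residue modulo $k$ except possibly the ``diagonal'' residue $r \equiv 2 \pmod k$, which is the original Moore--West problem.

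For the residue $r \equiv 2 \pmod k$, I would take an edge $e = uv$ at a degree-$k$ vertex $v$ (existence ensured by Gallai) and use the $k$-coloring of $G - e$ to build a short Kempe-alternating $u$-$v$ path; closing with $e$ yields a short cycle whose length we can control, and substituting one of its segments into a long cycle from the previous step adjusts the cycle length by a controlled amount. The step I expect to be the hardest is making this adjustment land precisely at the target residue; the hypothesis $k \geq 6$ presumably supplies the extra color flexibility needed for a clean swap argument, which is exactly why the remaining cases $k=4,5$ addressed in the present note should demand a separate and more delicate treatment.
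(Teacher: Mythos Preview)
This theorem is not proved in the present paper at all: it is quoted verbatim from \cite{GHM20} (their Theorem~1.4) and used as a black box, so there is no proof here to compare your proposal against. The paper's own contribution is precisely the complementary cases $k=4,5$ (Theorem~\ref{maintheorem_chromatic}), handled by the opposite-pair lemma in Section~\ref{SEClemma} and the BFS-layer coloring argument in Section~\ref{SECsix}.

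As for the proposal itself, it is not yet a proof but an outline with explicit gaps. The chord-iteration step (``iterating with many chords one can realize cycles of all lengths in a long interval'') is asserted rather than argued, and getting \emph{consecutive} lengths from chords of a single long cycle is exactly the delicate part; abundance of chords alone does not give this. Your final paragraph openly concedes that the key residue $2 \bmod k$ is unresolved and that you only ``expect'' the Kempe-chain swap to work. If you want to see how the actual argument goes, consult \cite{GHM20} directly: its methods are quite different from what you sketch (they rely on structural results such as Theorems~\ref{A-B pathmodify}--\ref{containK3} of this paper, which are also imported from \cite{GHM20}, combined with BFS-tree layering), not on Gallai forests or Kempe chains.
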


However, methods in \cite{GHM20} do not work for $k<6$.
In this note, we give a new method and prove that the conclusion of Theorem \ref{old} also holds for $k=4,5$.

\begin{theo}\label{maintheorem_chromatic}
For $k=4,5$, every non-complete $(k+1)$-critical graph contains cycles of all lengths modulo $k$.
\end{theo}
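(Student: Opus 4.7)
Let $G$ be a non-complete $(k+1)$-critical graph with $k\in\{4,5\}$. By criticality $\delta(G)\ge k$ and $G$ is $2$-connected; by Brooks' theorem $\Delta(G)\ge k+1$. The main tool will be a Kempe-chain analysis: fix an edge $uv\in E(G)$ and take any proper $k$-coloring $\phi$ of $G-uv$. Then $\phi(u)=\phi(v)=:\alpha$ (else $\phi$ would extend to a $k$-coloring of $G$), and for every color $c\ne\alpha$ the vertices $u,v$ must lie in the same $(\alpha,c)$-Kempe component, else a Kempe swap would separate their colors. Hence there is an $(\alpha,c)$-alternating path $P_c$ from $u$ to $v$ of even length, and $C_c:=P_c+uv$ is an odd cycle through $uv$.

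The crucial residue observation is that when $k=5$ we have $\gcd(2,5)=1$, so odd cycles of five consecutive odd lengths $\ell,\ell+2,\ell+4,\ell+6,\ell+8$ already cover every residue modulo $5$; the task for $k=5$ therefore reduces to producing odd cycles of sufficiently many distinct lengths. I would do this using the four paths $\{P_c:c\ne\alpha\}$ together with length-shifts arising from local swaps on $(\alpha,d)$-Kempe components \emph{away from} $\{u,v\}$, which alter $|P_c|$ in controlled amounts and give access to further odd cycles. When $k=4$, by contrast, odd cycles only yield residues $\{1,3\}\pmod 4$, so one additionally needs even cycles of residues $0$ and $2$ mod $4$; these can be built from the edge-union $P_c\cup P_{c'}$ of two distinct Kempe paths (a closed even walk based at $u,v$ from which an even cycle is extracted), again fine-tuned by local swaps.

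A key use of non-completeness is that, since $G\ne K_n$, there exist non-adjacent pairs, which can be exploited in two ways: first, to guarantee non-trivial Kempe components outside $\{u,v\}$ on which swapping alters $|P_c|$ by a prescribed amount, and second, to apply Menger-type arguments producing internally disjoint paths between non-adjacent vertices whose length-difference mod $k$ can be controlled. In parallel I would invoke the Gallai-forest structure of the subgraph induced on degree-$k$ vertices to rule out degenerate configurations near a vertex of minimum degree.

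The main obstacle will be length-control with so few colors: for $k\in\{4,5\}$ the pool of Kempe recolourings is small, so unlike the $k\ge 6$ setting of \cite{GHM20} one cannot afford wasteful constructions. I anticipate the bulk of the proof to be a case analysis on the missing residue $r\pmod k$ in a minimum counterexample, in each case ruling out $r$ by a dedicated Kempe-chain-plus-recolouring argument; the $k=4$ case is likely the harder of the two, since it is there that one must genuinely manufacture even cycles of \emph{both} even residues rather than rely on the arithmetic accident $\gcd(2,k)=1$.
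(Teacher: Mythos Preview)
Your proposal is a sketch of a strategy rather than a proof, and the strategy has a genuine gap at its load-bearing step. The claim that local Kempe swaps on $(\alpha,d)$-components disjoint from $\{u,v\}$ ``alter $|P_c|$ in controlled amounts'' is unsupported and in general false: once you recolor such a component, vertices that used to have color $d$ now have color $\alpha$, which can create or destroy $(\alpha,c)$-edges incident to the Kempe component of $u$; the resulting path $P_c$ can change globally, not by $\pm 2$. So you have no mechanism to tune lengths. Concretely, for $k=5$ there are only four colors $c\ne\alpha$ and hence only four odd cycles $P_c+uv$, whose lengths are four unrelated odd integers; you need all five residues mod $5$, and nothing you wrote produces a fifth. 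For $k=4$, it is true that $P_c\cup P_{c'}$ is a closed even walk containing some even cycle, but you have no control over \emph{which} even cycle you extract, so you cannot prescribe its residue mod $4$. The appeals to non-adjacent pairs, Menger, and the Gallai forest of low-degree vertices are left entirely unspecific and do not fill this gap.

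The paper takes a completely different route, precisely because length-control with so few colors is hard. For $k=4$ it proves the stronger structural statement that every $2$-connected non-bipartite graph with $\delta\ge 4$ other than $K_5$ has cycles of all lengths mod $4$; the key lemma is that if such a graph contains an \emph{opposite pair} --- an odd cycle and an even cycle that are edge-disjoint and share at most one vertex --- then all residues mod $4$ occur. One then takes a shortest odd cycle $C$, shows $G-V(C)$ has no even cycle (else an opposite pair exists), and finishes by a short block-structure analysis. For $k=5$ the paper uses a BFS layering from a root $r$: it shows every component of each level $G[L_i]$ is $3$-colorable, and then (the real work) that between a non-bipartite component of $G[L_i]$ and a non-bipartite component of $G[L_{i+1}]$ the cross-edges only hit ``good'' vertices of the upper component; this permits a global proper $5$-coloring of $G$, contradicting $6$-criticality. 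Neither argument uses Kempe chains at all.
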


Thus, combined with the Theorems \ref{old} and \ref{maintheorem_chromatic}, we completely give an affirmative answer to the question of Moore and West.
See \cite{GHM20,MW} for the history and further references about cycle lengths in graphs of given chromatic number.

The rest of the paper is organized as follows.
In Section~\ref{definition}, we introduce the notation.
In Section~\ref{SEClemma}, we give a key lemma.
In Section \ref{SECfive}, we consider graphs of chromatic number five and prove Theorem \ref{maintheorem_chromatic} for the case $k=4$.
In Section \ref{SECsix}, we consider graphs of chromatic number six and prove Theorem \ref{maintheorem_chromatic} for the case $k=5$.

\section{Notation}\label{definition}

All graphs considered are finite, undirected, and simple.
Let $G$ be a graph and let $H$ be a subgraph of a graph $G$.
We say that $H$ and a vertex $v\in V(G)-V(H)$ are {\it adjacent} in $G$ if $v$ is adjacent in $G$ to some vertex in $V(H)$.
Let $N_G(H):=\bigcup_{v\in V(H)}N_G(v)-V(H)$ and $N_G[H]:=N_G(H)\cup V(H)$.
For a subset $S$ of $V(G)$, $G[S]$ denotes the subgraph induced by $S$ in $G$, and $G-S$ denotes the subgraph $G[V(G)-S]$.
A vertex is a {\it leaf} in $G$ if it has degree one in $G$.
We say that a path $P$ is {\it internally disjoint} from $H$ if no vertex of $P$ other than its endpoints is in $V(H)$.
For two vertex-disjoint subgraphs $H, H'$ of $G$, let $N_{H}(H')$ be the set of vertices in $H$ which is adjacent to some vertex in $H'$.

A cycle or a path is said to be \textit{odd} (resp.~\textit{even})
if its length is odd (resp.~even).
Given a cycle $C$ and an orientation of $C$,
for two vertices $x$ and $y$ in $C$,
we denote by $C[x,y]$ the path on $C$ from $x$ to $y$ in
the direction,
including $x$ and $y$.
Let $C[x,y) := C[x,y] - y$,
$C(x,y] := C[x,y] - x$,
and
$C(x,y) := C[x,y] - \{x,y\}$.
We use the similar notation to a path $P$.

Let $u$ and $v$ be vertices of a graph.
If there are three internally disjoint paths between $u$ and $v$,
then we call such a graph as \emph{theta graph}.
Note that any theta graph contains an even cycle.

A vertex $v$ of a graph $G$ is a {\em cut-vertex} of $G$ if $G-v$ contains more components than $G$.
A {\em block} $B$ in $G$ is a maximal connected subgraph of $G$ such that there exists no cut-vertex of $B$.
So a block is an isolated vertex, an edge or a $2$-connected graph.
An {\em end-block} in $G$ is a block in $G$ containing at most one cut-vertex of $G$.
If $D$ is an end-block of $G$ and a vertex $x$ is the only cut-vertex of $G$ with $x \in V(D)$, then we say that $D$ is an {\em end-block with cut-vertex $x$}.

Let $T$ be a tree, and fix a vertex $r$ as its root.
Let $v$ be a vertex of $T$.
The $\it{parent}$ of $v$ is the vertex adjacent to $v$ on the path from $v$ to $r$.
An $\it{ascendant}$ of $v$ is any vertex which is either the parent of $v$ or is recursively the ascendant of the parent of $v$.
A $\it{child}$ of $v$ is a vertex of which $v$ is the parent.
A $\it{descendant}$ of $v$ is any vertex which is either the child of $v$ or is recursively the descendant of any of the children of $v$.
Let $Y$ be a subset of $V(T)$. We say a vertex $x$ is the $\it{descendant}$ of $Y$ if $x$ is the descendant of some vertex in $Y$.
Let $a,b$ be two vertices of $T$.
Denote $T_{a,b}$ the unique path between $a$ and $b$ in $T$.

\section{Key lemma}\label{SEClemma}

Let $G$ be a $2$-connected graph and let $C$ and $D$ be two cycles in $G$.
We say that $(C,D)$ is {\it an opposite pair} in $G$, if $C$ is odd and $D$ is even satisfying that $C$ and $D$ are edge-disjoint and share at most one common vertex.

\begin{lem}\label{mainlemma}
Let $G$ be a $2$-connected graph of minimum degree at least $4$.
Let $(C,D)$ be an opposite pair in $G$.
Then $G$ contains cycles of all lengths modulo $4$.
\end{lem}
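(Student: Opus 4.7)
The plan is to produce cycles of all four residues modulo $4$ from the subgraph on and around $C\cup D$, using $2$-connectedness to generate auxiliary paths between the two cycles and the minimum-degree hypothesis to supply extra flexibility in the hardest residue configuration.

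I would first reduce to a canonical setup. If $V(C)\cap V(D)=\{v\}$, the connectivity of $G-v$ (which follows from $G$ being $2$-connected) allows me to choose a shortest path $P$ from $V(C)\setminus\{v\}$ to $V(D)\setminus\{v\}$ internally disjoint from $C\cup D$. If $V(C)\cap V(D)=\emptyset$, then $2$-connectedness of $G$ (applied via Menger's theorem on an auxiliary graph contracting $V(C)$ and $V(D)$) yields two internally disjoint $V(C)$-$V(D)$ paths $P_1, P_2$. In either case, the resulting subgraph contains six cycles: $C$, $D$, and four \emph{composite} cycles of lengths $c_i+d_j+s$ for $i,j\in\{1,2\}$, where $c_1,c_2$ (with $c_1+c_2=|C|$ odd) and $d_1,d_2$ (with $d_1+d_2=|D|$ even) are the arc lengths of $C$ and $D$ cut at the path attachments, and $s$ is the total auxiliary path length. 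Since $c_2-c_1$ is odd and $d_2-d_1$ is even, a direct computation modulo $4$ shows that these six cycles realize all four residues whenever $d_2-d_1\equiv 2\pmod{4}$; this is the easy case.

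The hard case is $d_2-d_1\equiv 0\pmod{4}$, where the four composite cycles collapse to only two residues modulo $4$, so the six cycles together may still miss one or even two residues. Here I would invoke the minimum-degree condition: every vertex on $D$ (and on $C$) has at least two neighbors outside its cycle, so I can extract an additional ear $Q$ leaving $D$ and reattaching somewhere in the constructed subgraph. Choosing where $Q$ lands shifts the effective cut point on $D$, which in turn changes $d_2-d_1\pmod{4}$ and reduces the situation to the easy case for a modified opposite pair, thereby producing the missing residues. A symmetric construction using an ear attached to $C$ handles obstructions on the odd side.

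The main obstacle will be this last step: an arbitrarily chosen ear may reattach in an inconvenient spot (on the auxiliary path, or back on the same arc of $D$), and one must argue that among the two surplus neighbors off $D$ of some vertex of $D$, at least one leads to an ear whose reattachment shifts $d_1\pmod{4}$ by an odd amount. This is precisely where the assumption ``minimum degree $\geq 4$'' is essential, as opposed to the weaker ``$\geq 3$'': having two extra neighbors per vertex provides enough room to avoid the bad landing locations while still controlling the new value of $d_2-d_1\pmod{4}$.
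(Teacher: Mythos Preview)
Your setup and the easy case ($d_2-d_1\equiv 2\pmod 4$) are correct and match the paper's opening moves. The gap is entirely in the hard case. You assert that with two surplus neighbors off $D$ per vertex, ``at least one leads to an ear whose reattachment shifts $d_1\pmod 4$ by an odd amount'', but you do not prove this, and as stated it is not true: both ears issuing from a chosen vertex of $D$ may reattach on $D$ itself at points that preserve $d_2-d_1\equiv 0\pmod 4$, or on $C$ or on an auxiliary path in a way that does not reduce to the easy case. Having two extra neighbors is not by itself enough to force one of them to be useful; you need a mechanism that rules out all bad landings simultaneously, and your proposal supplies none.

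The paper provides exactly that mechanism, and it is global rather than local. It chooses the opposite pair $(C,D)$ together with the two connecting paths $P,Q$ \emph{extremally} (first maximizing $|E(P)|$, then $|E(Q)|$ subject to that). This extremal choice forces every even cycle --- hence every theta subgraph --- in the block on the $D$-side to pass through both attachment points $p,q$ (otherwise one could reroute and lengthen $P$ or $Q$). From this one deduces, via a substantial case analysis, that no vertex of $D$ in the ``wrong'' bipartition class $B\setminus\{q\}$ (where $V(D)=A\cup B$ alternately and $p\in A$) can send a path to $C\cup P\cup Q-\{p,q\}$ internally disjoint from $C\cup D\cup P\cup Q$. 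Only after this structural restriction is in place does minimum degree $\geq 4$ enter: a vertex $z\in B\setminus\{q\}$ then has \emph{two} outgoing paths $Z,S$, both forced by the previous claim to land back in $D\cup Z$, and a final case split on their landing points yields either a forbidden theta graph avoiding $p$ or $q$, or directly produces four cycle lengths covering all residues modulo $4$.

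So your outline points at the right phenomenon, but the single-ear shift you sketch is not the argument; the actual proof requires the extremal choice over all opposite pairs together with connecting paths, the resulting control on theta subgraphs, and a two-ear configuration at a parity-bad vertex of $D$ --- none of which appears in your proposal.
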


\begin{proof}
Suppose to the contrary that $G$ does not contain cycles of all lengths modulo $4$.
Since $G$ is $2$-connected and $|V(C)\cap V(D)|\leq 1$,
there exist two vertex disjoint paths $P,Q$ between $C$ and $D$ satisfying $(V(C)\cap V(D))-V(Q)=\emptyset$.\footnote{We remarked that (i) if $V(C)\cap V(D)=\emptyset$, then $P$ and $Q$ are vertex disjoint, (ii) if $C$ and $D$ share one common vertex, then $V(Q)=V(C)\cap V(D)$.}
We take such an opposite pair $(C,D)$, paths $P$ and $Q$ as the following manner:
\begin{enumerate}[(1)]
\item $|E(P)|$ is as large as possibly,
\item $|E(Q)|$ is as large as possible subject to (1).
\end{enumerate}

Let $p$ and $q$ be the endpoints of $P$ and $Q$ in $D$, respectively.

\begin{claim}\label{maximize}
Every even cycle in the block of $G-(V(C\cup P\cup Q)-\{p,q\})$ including $D$ contains both $p$ and $q$.
In particular, every theta graph in the block includes both $p$ and $q$.
\end{claim}

\begin{proof}[Proof of Claim \ref{maximize}]
Let $H$ be the block of $G-(V(C\cup P\cup Q)-\{p,q\})$ including $D$.
Let $D'$ be an even cycle in $H$ other than $D$.
Suppose that $p\notin V(D')$.
Since $H$ is $2$-connected, there are two vertex disjoint paths $L_1,L_2$ from $\{p,q\}$ to $D'$ in $H$.
We may assume that $L_1$ links $p$ and $D'$.
Note that $L_1$ has length at least $1$ and $(C,D')$ is an opposite pair.
Then $P\cup L_1$ and $Q\cup L_2$ are two internally disjoint paths between $C$ and $D'$ such that $P\cup L_1$ is longer than $P$, a contradiction.
Therefore, $p\in V(D')$.

Suppose that $q\notin V(D')$.
Since $H$ is $2$-connected, there is a path $L_3$ from $q$ to $D'$ internally disjoint from $V(D')$ in $H$.
Note that $L_3$ has length at least $1$ and $(C,D')$ is an opposite pair.
Then $P$ and $Q\cup L_3$ are two internally disjoint paths between $C$ and $D'$ such that $Q\cup L_3$  is longer than $Q$, a contradiction.
Therefore, $q\in V(D')$.
Since every theta graph contains an even cycle, every theta graph in $H$ includes both $p$ and $q$.
This completes the proof of Claim \ref{maximize}.
\end{proof}

Since $D$ is an even cycle, we partition $V(D)$ into the sets $A$ and $B$ alternatively along $D$.
By symmetry between $A$ and $B$, we may assume that $p\in A$.

\begin{claim}\label{nopath_B_to_CPQ}
For any $b \in B-\{q\}$,
there is no path from $b$ to $C \cup P \cup Q - \{p,q\}$ internally disjoint from $C \cup D \cup P \cup Q$.
\end{claim}

\begin{proof}[Proof of Claim \ref{nopath_B_to_CPQ}]
Suppose to the contrary that there is a path $R$ from $b$ to $x\in V(C\cup P\cup Q) - \{p,q\}$ internally disjoint from $C \cup D \cup P \cup Q$.
By symmetry, we may assume that $b\in D(p,q)$.

Assume that $|E(D)|\equiv 0$ modulo $4$.
As $C$ is an odd cycle, there is an even path $X_1$ and an odd path $Y_1$ between $p$ and $q$ in $C \cup P \cup Q$.
If $q \in B$, then both $|E(D[p,q])|$ and $|E(D[q,p])|$ are odd,
and furthermore, since their sum is $0$ modulo $4$, they differ by $2$ modulo $4$.
Then $X_1 \cup D[p,q], X_1 \cup D[q,p], Y_1 \cup D[p,q]$ and $Y_1 \cup D[q,p]$ are $4$ cycles of different lengths modulo $4$, a contradiction.
Therefore, we have that $q \in A$.

\begin{itemize}
\item Suppose that $x\in V(P)-\{p\}$.
Since $C$ is an odd cycle,
there is an even path $X_2$ and an odd path $Y_2$ between $b$ and $q$ in $C \cup P \cup Q \cup R$.
However, since both $|E(D[b,q])|$ and $|E(D[q,b])|$ are odd and differ by $2$ modulo $4$,
$X_2 \cup D[b,q], X_2 \cup D[q,b], Y_2 \cup D[b,q]$ and $Y_2 \cup D[q,b]$ are $4$ cycles of different lengths modulo $4$, a contradiction.
Thus, $x$ is not contained in $V(P)-\{p\}$.
\item Suppose that $x\in V(C\cup Q) - (V(P)\cup\{q\})$.
Then there is an even path $X_3$ and an odd path $Y_3$ between $b$ and $p$ in $C \cup P \cup Q \cup R$.
However, since both $|E(D[b,p])|$ and $|E(D[p,b])|$ are odd and differ by $2$ modulo $4$,
$X_3 \cup D[b,p], X_3 \cup D[p,b], Y_3 \cup D[b,p]$ and $Y_3 \cup D[p,b]$ are $4$ cycles of different lengths modulo $4$, a contradiction.
Thus, $x$ is not contained in $V(C\cup Q) - (V(P)\cup\{q\})$.
\end{itemize}

Therefore, $|E(D)|\equiv 2$ modulo $4$.
As $C$ is an odd cycle, there is an even path $X_4$ and an odd path $Y_4$ between $p$ and $q$ in $C \cup P \cup Q$.
If $q \in A$, then both $|E(D[p,q])|$ and $|E(D[q,p])|$ are even,
and furthermore, since their sum is $2$ modulo $4$, they differ by $2$ modulo $4$.
Then $X_4 \cup D[p,q], X_4 \cup D[q,p], Y_4 \cup D[p,q]$ and $Y_4 \cup D[q,p]$ are $4$ cycles of different lengths modulo $4$, a contradiction.
Therefore, we have that $q \in B$.

\begin{itemize}
\item Suppose that $x\in V(C\cup P)-(V(Q)\cup\{p\})$.
Since $C$ is an odd cycle,
there is an even path $X_5$ between $b$ and $q$ and an odd path $Y_5$ between $b$ and $q$ in $C \cup P \cup Q \cup R$.
However, since both $|E(D[b,q])|$ and $|E(D[q,b])|$ are odd and differ by $2$ modulo $4$,
$X_5 \cup D[b,q], X_5 \cup D[q,b], Y_5 \cup D[b,q]$ and $Y_5 \cup D[q,b]$ are $4$ cycles of different lengths modulo $4$, a contradiction.
Thus, $x$ is not contained in $V(C\cup P)-(V(Q)\cup\{p\})$.

\item Suppose that $x\in V(Q) -\{q\}$.
Since $G$ is $2$-connected and $G$ has minimum degree at least $4$, there exists a path $T$ from $b$ to $y\in V(C\cup D\cup P\cup Q\cup R)-\{b\}$ internally disjoint from $C \cup D \cup P \cup Q\cup R$.
Based on previous analysis, we have that $y\in V(Q\cup D\cup R)-\{b\}$.

\begin{itemize}
\item If $y\in V(R\cup Q\cup D(b,p))-\{b\}$, then $D[b,p) \cup R \cup T\cup Q$ contains a theta graph.
It follows that there is an even $D_1$ cycle in $G-(C\cup P-Q)$.
Note that $(C,D_1)$ is an opposite pair in $G$.
It is easy to see that there are two internally disjoint paths $P'$ and $Q'$ between $C$ and $D'$ satisfying that $P'$ contains $P$ and is longer than $P$ and $Q'\subseteq Q\cup D(b,q]$, a contradiction.
Thus, $y$ is not contained in $V(R\cup Q\cup D(b,p))-\{b\}$.

\item Suppose that $y\in V(D[p,b))$.
Since $T\cup D[y,b]$ does not contain $q$ and $D[b,q]\cup R\cup Q[x,q]$ does not contain $p$, by the choice of opposite pairs, we have that $T\cup D[y,b]$ and $D[b,q]\cup R\cup Q[x,q]$ are both odd cycles.
Since $C$ is an odd cycle, there is an odd path $X'$ and an even path $Y'$ between $y$ and $x$ in $C \cup P \cup Q\cup D[p,y]$.
Note that $X'$ and $Y'$ differ by $1$ modulo $4$, $T$ and $D[y,b]$ differ by $1$ modulo $4$ and $D[b,q]\cup Q[x,q]$ and $R$ differ by $1$ modulo $4$.
Then the set $\{L_1\cup L_2\cup L_3|L_1\in\{X',Y'\},L_2\in\{T,D[y,b]\},L_3\in\{D[b,q]\cup Q[x,q],R\}\}$ contains cycles of all lengths modulo $4$, a contradiction.
Thus, $y$ is not contained in $V(D[p,b))$.
\end{itemize}

\end{itemize}

This completes the proof of Claim \ref{nopath_B_to_CPQ}.
\end{proof}

Let $z$ be a vertex in $B-\{q\}$.
By symmetry, we may assume that $z \in V(D(p,q))$.
Since $z$ has degree at least $4$ in $G$ and $G$ is $2$-connected,
there is a path $Z$ from $z$ to $C \cup D \cup P \cup Q - \{z\}$ internally disjoint from $C \cup D \cup P \cup Q$.
By Claim \ref{nopath_B_to_CPQ}, the endpoint of $Z$ other than $z$ is contained in $D - \{z\}$.
Let $r$ be the endpoint of $Z$ other than $z$.
Since $z$ has degree at least $4$ in $G$ and $G$ is $2$-connected,
there is a path $S$ from $z$ to $s\in V(C \cup D \cup P \cup Q \cup Z)- \{z\}$ internally disjoint from $C \cup D \cup P \cup Q \cup Z$.
By Claim \ref{nopath_B_to_CPQ}, $s$ is contained in $V(D\cup Z) - \{z\}$.

\begin{itemize}

\item Suppose that $s\in V(Z)-\{z\}$.

\begin{itemize}
\item If $r\in V(D(z,p))$, then $D[z,r]\cup Z\cup S$ is a theta graph not containing $p$, contradicting Claim \ref{maximize}.
\item If $r\in V(D[p,z))$, then $D[r,z]\cup Z\cup S$ is a theta graph not containing $q$, contradicting Claim \ref{maximize}.
\end{itemize}

Thus, $s$ is not contained in $V(Z)-\{z\}$.
\item Suppose that $s\in D-\{z,r\}$.
By symmetry between $r$ and $s$, we may assume that $s \in V(D(r,z))$.

\begin{itemize}
\item If $r\in V(D(q,z))$, then $D[r,z]\cup Z\cup S$ is a theta graph not containing $q$, contradicting Claim \ref{maximize}.
\item If $r\in V(D(z,q])$ and $s\in V(D(r,p))$, then $D[z,s]\cup Z\cup S$ is a theta graph not containing $p$, contradicting Claim \ref{maximize}.
\item Therefore $r\in V(D(z,q])$ and $s\in V(D[p,z))$.
Since $S\cup D[s,z]$ does not contain $q$ and $D[z,r]\cup Z$ does not contain $p$, by Claim \ref{maximize}, we have that $S\cup D[s,z]$ and $D[z,r]\cup Z$ are both odd cycles.
Since $C$ is an odd cycle, there is an odd path $X''$ and an even path $Y''$ between $s$ and $r$ in $C \cup P \cup Q\cup D[p,s]\cup D[r,q]$.
Note that $X''$ and $Y''$ differ by $1$ modulo $4$, $S$ and $D[s,z]$ differ by $1$ modulo $4$ and $D[z,r]$ and $Z$ differ by $1$ modulo $4$.
Then the set $\{L_1\cup L_2\cup L_3|L_1\in\{X'',Y''\},L_2\in\{S,D[s,z]\},L_3\in\{D[z,r],Z\}\}$ contains cycles of all lengths modulo $4$, a contradiction.
\end{itemize}

\end{itemize}

This completes the proof of Lemma \ref{mainlemma}.
\end{proof}

\section{Graphs of chromatic number five}\label{SECfive}

In this section, we prove the following theorem on $2$-connected graphs of minimum degree at least four, from which Theorem \ref{maintheorem_chromatic} can be inferred as a corollary for the case $k=4$.

\begin{theo}\label{maintheorem}
Every $2$-connected non-bipartite graph of minimum degree at least $4$ contains cycles of all lengths modulo $4$, except that it is the complete graph of five vertices.
\end{theo}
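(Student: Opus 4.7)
My plan is to reduce Theorem~\ref{maintheorem} to Lemma~\ref{mainlemma} by exhibiting an opposite pair $(C,D)$ in $G$. Assume $G$ is $2$-connected, non-bipartite, has $\delta(G)\ge 4$, and $G\neq K_5$; I seek an odd cycle $C$ and an even cycle $D$ that are edge-disjoint and share at most one vertex, since Lemma~\ref{mainlemma} then delivers cycles of all lengths modulo $4$.

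I would first let $C$ be a shortest odd cycle of $G$, which by minimality is chordless. A standard parity argument shows that when $|V(C)|\ge 5$, every $v\notin V(C)$ has at most two neighbours on $C$, and if it has exactly two neighbours $v_i,v_j$ then they lie at distance exactly $2$ on $C$: the two cycles through $v$ produced by the two arcs of $C$ between $v_i$ and $v_j$ have lengths summing to $|C|+2$ with one odd and one even, so by minimality the odd one has length $|C|$ and the even one has length $4$. Because $C$ is induced and $\delta(G)\ge 4$, $V(G)\setminus V(C)$ is non-empty and the chord bound gives $\delta(G-V(C))\ge 2$. If $G-V(C)$ contains any even cycle $D$, then $(C,D)$ is already an opposite pair and Lemma~\ref{mainlemma} closes the argument.

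The main obstacle is the complementary case, in which every cycle of $G-V(C)$ is odd. Since a $2$-connected graph whose cycles are all odd must itself be an odd cycle (adding any ear to a cycle produces a cycle of each parity), each non-trivial block of $G-V(C)$ is an odd cycle; combined with $\delta(G-V(C))\ge 2$ this yields a cactus-like structure in which every vertex of $G-V(C)$ lies on some odd cycle. Fix an odd cycle $C'\subseteq G-V(C)$; by $2$-connectivity of $G$ there are two vertex-disjoint paths $P_1,P_2$ from $V(C)$ to $V(C')$, giving a theta-like structure on $C\cup C'\cup P_1\cup P_2$. Exactly two of the four composite cycles that traverse both paths are even, but each such cycle visits an entire arc of $C$ and therefore shares at least two vertices with $C$. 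The crux of the proof is to shortcut the offending $C$-arc, using the degree condition at the endpoints of $P_1,P_2$ in $V(C')$ together with the cactus structure of $G-V(C)$, so as to produce an even cycle meeting $V(C)$ in at most one vertex; the freedom to swap the roles of $C$ and $C'$, which are both odd, is crucial in making this work.

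Finally, the triangle case $|V(C)|=3$ needs separate care, because there an outside vertex can be adjacent to all three vertices of $C$ and the chord bound weakens, producing $K_4$ subgraphs; I would handle it either by passing to a longer odd cycle to play the role of $C$, or by showing that if such longer odd cycles do not exist then the accumulated $K_4$'s force $G=K_5$, contradicting our hypothesis. I expect the even-cycle shortcut construction in the ``all odd cycles outside $V(C)$'' regime to be the principal technical difficulty.
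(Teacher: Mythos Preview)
Your overall framework---take a shortest odd cycle $C$, set $H=G-V(C)$, invoke Lemma~\ref{mainlemma} when $H$ contains an even cycle, and otherwise exploit that every block of $H$ is an odd cycle or an edge---coincides with the paper's. The divergence, and the gap, is in the residual case where $H$ has no even cycle.

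You propose to keep reducing to Lemma~\ref{mainlemma} by manufacturing an opposite pair from $C$, an odd cycle $C'\subseteq H$, and two disjoint $C$--$C'$ paths $P_1,P_2$, then ``shortcutting'' an arc of $C$. This step is not carried out, and as stated it is not clear how the cactus structure of $H$ supplies material to replace an arc of $C$; every composite even cycle you build from $C\cup C'\cup P_1\cup P_2$ meets each of $C$ and $C'$ in at least two vertices, so neither $(C,\cdot)$ nor $(C',\cdot)$ is an opposite pair without further work that you do not specify. (There \emph{is} a clean opposite pair available in the triangle-free case, but not where you are looking: any non-cut vertex $t$ of an odd-cycle end-block $C'$ of $H$ has exactly two neighbours $v_j,v_{j+2}$ on $C$, and then $(C',\,tv_jv_{j+1}v_{j+2}t)$ already forms an opposite pair.)

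The paper does not pursue an opposite pair in this case at all. After first disposing of triangles by a genuine case analysis---this is not a throwaway, since an outside vertex may see all three vertices of a triangle, several tree/odd-cycle configurations for components of $H$ must be handled, and the exception $G=K_5$ has to emerge explicitly---it shows that $H$ has minimum degree at least~$2$ and is connected, chooses $t\in V(H)$ with neighbours $v_j,v_{j+2}$ on $C$, chooses $s\in V(H)$ adjacent to the near-antipodal vertex $v_{j+\ell+1}$ of $C$ (where $|V(C)|=2\ell+1$), and takes a path $L$ from $t$ to $s$ inside $H$. Routing through $L$ and each of the four arcs of $C$ between $\{v_j,v_{j+2}\}$ and $v_{j+\ell+1}$ yields four cycles whose lengths are four consecutive integers, finishing directly. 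The idea you are missing is this antipodal construction; your ``shortcut'' plan, as written, does not reach it, and your treatment of the triangle case is also only a sketch where the paper needs real work.
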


\begin{proof}
Let $G$ be a $2$-connected non-bipartite graph of minimum degree at least $4$.
Assume that $G$ is not a $K_5$ and does not contain cycles of all lengths modulo $4$.
Let $C:=v_0v_1\ldots v_{2\ell}v_0$ be an odd cycle in $G$ such that $|V(C)|$ is minimum, where the indices are taken under the additive group $\mathbb{Z}_{2\ell+1}$.
Note that $C$ is induced.
Let $H:=G-V(C)$.
By Lemma \ref{mainlemma}, there is no opposite pairs in $G$, hence $H$ does not contain an even cycle.
It follows that every block of $H$ is either an odd cycle, an edge or an isolated vertex.

\begin{cl}
$G$ does not contain a triangle.
\end{cl}

\begin{proof}[Proof of Claim]
Suppose that $G$ contains a triangle.
Then $C$ is a triangle.
Let $H_1$ be a component of $H$.
Since $G$ has minimum degree at least $4$, $H_1$ has at least two vertices.
Suppose that $H_1$ contains an odd cycle $C_1$.
\begin{itemize}
\item If $H_1$ is not $2$-connected, then there exists an end-block $B_1$ of $H_1$ with cut-vertex $b_1$ such that $(V(B_1)-\{b_1\})\cap C_1=\emptyset$.
As $B_1$ is either an odd cycle or an edge, there exists $w\in V(B_1)-\{b_1\}$ such that $w$ has at least two neighbors on $C$.
Since $C$ is an odd cycle, $G[C\cup \{w\}]$ contains an even cycle $D_1$.
Then $C_1$ and $D_1$ form an opposite pair in $G$, a contradiction.
\item Therefore, $H_1$ is $2$-connected, that is $H_1$ is an induced odd cycle, we denote $H_1:=u_0u_1\ldots u_{2h}u_0$, where the indices are taken under the additive group $\mathbb{Z}_{2h+1}$.
Since $G$ has minimum degree at least $4$, $u_0$ and $u_2$ have at least two neighbors on $C$.
Without loss of generality, we may assume that $u_0$ is adjacent to $v_0$ and $v_1$ and $u_2$ is adjacent to $v_0$.
Then $C,u_0v_0v_2v_1u_0,u_0u_1u_2v_0v_1u_0$ and $u_0u_1u_2v_0v_2v_1u_0$ are cycles of lengths $3,4,5$ and $6$, respectively, a contradiction.
\end{itemize}

Therefore every component of $H$ does not contain an odd cycle, that is, every component of $H$ is a tree.
\begin{itemize}

\item If $|V(H_1)|=2$, then $G[C\cup H_1]$ is a $K_5$.
Suppose that there is another component $H_2\neq H_1$ of $H$.
Since $G$ is $2$-connected, there are two disjoint path $L_1$ and $L_2$ from $H_2$ to $C$ internally disjoint from $C$ in $G[H_2\cup C]$.
Without loss of generality, we may assume that $V(L_i)\cap V(C)=\{v_i\}$ for $i=1,2$.
Concatenating $L_1$, $L_2$ and a path in $H_2$, there exists a path $L$ from $v_1$ to $v_2$ internally disjoint from $C$ in $G[H_2\cup C]$. As there are paths of lengths $1,2,3$ and $4$ from $v_1$ to $v_2$ in $G[H_1\cup C]$, we could easily obtain $4$ cycles of consecutive lengths, a contradiction.
Therefore, $H=H_1$.
It follows that $G=G[C\cup H_1]$, a contradiction.

\item Therefore $|V(H_1)|\geq3$.
For any two leaves $x,y$ of $H_1$, let $T$ be the fixed path between $x$ and $y$ in $H_1$.
Since $G$ has minimum degree at least $4$, $x$ and $y$ have at least two neighbors on $C$.
Without loss of generality, we may assume that $x$ is adjacent to $v_0$ and $v_1$ and $y$ is adjacent to $v_0$.
If $T$ is even, then $C$ and $v_0yTxv_0$ form an opposite pair, a contradiction.
Therefore $T$ is odd.
Suppose that there exist three leaves $x,y$ and $z$ in $H_1$. Let $T_{x,y},T_{y,z}$ and $T_{z,x}$ be the fixed paths between $x$ and $y$, $y$ and $z$ and $z$ and $x$ in $H_1$, respectively.
Note that all of them are odd.
However, there sum is even, a contradiction.
Therefore, $H_1$ is a path.
Let $H_1:=z_0z_1z_2\ldots z_n$ for some $n\geq 2$.
Since $G$ has minimum degree at least $4$, $z_0$ is adjacent to all vertices of $C$ and $z_2$ is adjacent to at least $2$ vertices of $C$. Without loss of generality, we may assume that $z_2$ is adjacent to $v_0$ and $v_1$.
Then $C,z_0v_0v_2v_1z_0,z_0z_1z_2v_0v_1z_0$ and $z_0z_1z_2v_0v_2v_1z_0$ are cycles of lengths $3,4,5$ and $6$, respectively, a contradiction.
\end{itemize}
This completes the proof of Claim.
\end{proof}

By Claim, $G$ does not contain a triangle.
Suppose that there is a vertex $u$ of degree at most one in $H$.
Since $G$ has minimum degree at least $4$, $u$ has at least three neighbors on $C$.
Since $C$ is odd, there exist two distinct neighbors $v_i,v_j$ of $u$ on $C$ such that the odd path between $v_i$ and $v_j$ on $C$ has no internal vertices which are the neighbors of $u$ in $G$ .
Let $Q_o,Q_e$ be the odd and even paths between $v_i$ and $v_j$ in $C$ respectively.
Let $C':=uv_i\cup Q_o\cup v_ju$.
Note that $C'$ is an odd cycle.
By the choice of $C$, we have that $|E(C')|\geq|E(C)|$.
This fores that $|E(Q_e)|=2$ and $u$ is adjacent to all vertices of $V(Q_e)$.
It follows that there is a triangle in $G$, a contradiction.
Therefore, $H$ has minimum degree at least $2$.

Suppose that $H$ has more than one component.
Let $W_1$ and $W_2$ be two components of $H$.
Since every vertex in $W_1$ has degree at least $2$, we have that $W_1$ contains an odd cycle $C_2$.
Since $G$ is $2$-connected and $C$ is an odd cycle, there is an even cycle $D_2$ in $G[V(C)\cup W_2]$.
Thus, $C_2$ and $D_2$ form an opposite pair, a contradiction.
Therefore, $H$ is connected.

Note that $H$ has minimum degree at least $2$ and every block of $H$ is either an odd cycle, an edge or an isolated vertex.
There is a vertex $t$ of $H$ which has at least two neighbors on $C$.
Since $C$ is odd, there exist two distinct neighbors $v_i,v_j$ of $t$ on $C$ such that the odd path between $v_i$ and $v_j$ on $C$ has no internal vertices which are the neighbors of $t$ in $G$ .
Let $Q'_o,Q'_e$ be the odd and even paths between $v_i$ and $v_j$ in $C$ respectively.
Let $C'':=tv_i\cup Q'_o\cup v_jt$.
Note that $C''$ is an odd cycle.
By the choice of $C$, we have that $|E(C'')|\geq|E(C)|$.
This fores that $|E(Q'_e)|=2$.
Without loss of generality, we may assume that $i=j+2$.
Let $s$ be the neighbor of $v_{j+\ell+1}$ in $H$.
Since $H$ is connected, there is a path $L$ between $t$ and $s$ in $H$.
Then $C[v_{j+2},v_{j+\ell+1}]\cup v_{j+\ell+1}s\cup L\cup tv_{j+2},\ C[v_{j+\ell+1},v_j]\cup v_jt\cup L\cup sv_{j+\ell+1},\ C[v_j,v_{j+\ell+1}]\cup v_{j+\ell+1}s\cup L\cup tv_{j},\ C[v_{j+\ell+1},v_{j+2}]\cup v_{j+2}t\cup L\cup sv_{j+\ell+1}$ are 4 cycles of consecutive lengths, a contradiction.
This completes the proof of Theorem \ref{maintheorem}.
\end{proof}

We remark that Theorem \ref{maintheorem} is best possible by the following examples.
For any positive integer $t$, let $P_t:=v_0v_1\ldots v_{2t+1}$ and $Q_t:=u_0u_1\ldots u_{2t+1}$ be two vertex disjoint paths.
Let $H_t$ be the graph obtained from $P_t\cup Q_t$ by adding edges in
$\{v_{2i}u_{2i+1}, u_{2i}v_{2i+1}, u_0v_0, u_{2t+1}v_{2t+1}|i=0,1,\ldots,t\}$.
We see that $H_t$ is a $2$-connected non-bipartite graph of minimum degree $3$ without cycles of length $1$ modulo $4$.

\begin{figure}[h]
\centering{}
\begin{tikzpicture}[thick,scale=2]
\node (u0) at (0, 0){$u_0$};
\node (u1) at (1, 0){$u_1$};
\node (v0) at (0, 1){$v_0$};
\node (v1) at (1, 1){$v_1$};
\node (ui) at (2, 0){$u_{2i}$};
\node (ui1) at (3, 0){$u_{2i+1}$};
\node (vi) at (2, 1){$v_{2i}$};
\node (vi1) at (3, 1){$v_{2i+1}$};
\node (ut) at (4, 0){$u_{2t}$};
\node (ut1) at (5, 0){$u_{2t+1}$};
\node (vt) at (4, 1){$v_{2t}$};
\node (vt1) at (5, 1){$v_{2t+1}$};

\draw (u0) -- (u1);
\draw (u0) -- (v0);
\draw (v0) -- (u1);
\draw (v0) -- (v1);
\draw (u0) -- (v1);

\draw[dashed] (v1) -- (vi);
\draw[dashed] (u1) -- (ui);

\draw (vi) -- (vi1);
\draw (ui) -- (ui1);
\draw (vi) -- (ui1);
\draw (ui) -- (vi1);

\draw[dashed] (vi1) -- (vt);
\draw[dashed] (ui1) -- (ut);

\draw (ut) -- (ut1);
\draw (ut1) -- (vt1);
\draw (vt) -- (ut1);
\draw (vt) -- (vt1);
\draw (ut) -- (vt1);
\end{tikzpicture}
\caption{Graphs without cycles of length $1$ modulo $4$}
\end{figure}
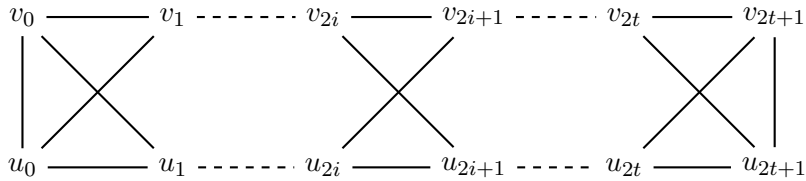

\section{Graphs of chromatic number six}\label{SECsix}

In this section, we consider graphs of chromatic number six and prove Theorem \ref{maintheorem_chromatic} for the case $k=5$.
We need the following theorems in \cite{GHM20}.

\begin{theo}[\cite{GHM20} Theorem 3.2]\label{A-B pathmodify}
Let $G$ be a connected graph of minimum degree at least three and $(A,B)$ be a non-trivial partition of $V(G)$.
For any cycle $C$ in $G$, there exist $A$-$B$ paths of every length less than $|V(C)|$ in $G$, unless $G$ is bipartite with the bipartition $(A,B)$.
\end{theo}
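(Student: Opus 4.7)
My approach is by contradiction via a minimum counterexample: suppose $G$, together with a non-trivial partition $(A,B)$ of $V(G)$ and a cycle $C$, forms a counterexample minimizing $|V(G)|$, so that for some $\ell$ with $1 \leq \ell < |V(C)|$ no $A$-$B$ path of length $\ell$ exists, while $G$ is not bipartite with bipartition $(A,B)$. The case $\ell = 1$ is automatic, since in any connected graph with both $A$ and $B$ nonempty the first edge along some $A$-to-$B$ walk is an $A$-$B$ edge; so $\ell \geq 2$.

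The central idea is to use $C$ as a length regulator. If both $A \cap V(C)$ and $B \cap V(C)$ are nonempty, then for each pair $u \in A \cap V(C)$, $v \in B \cap V(C)$ the two arcs of $C$ between $u$ and $v$ are $A$-$B$ paths whose lengths sum to $|V(C)|$, so many lengths can be realized directly on $C$. For lengths not realized by arcs of $C$, I would use the minimum-degree-$3$ hypothesis: each vertex of $C$ has a neighbor off $C$ or participates in a chord of $C$, giving a short detour that shifts a path length by $\pm 1$ and can swap its endpoint between $A$ and $B$. Combining arcs and detours should yield $A$-$B$ paths of every length $1, 2, \ldots, |V(C)|-1$ in this case.

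The main obstacle is the case $V(C) \subseteq A$ (or symmetrically $V(C) \subseteq B$), where $C$ alone contributes no $A$-$B$ path at all. Here I would take a shortest $C$-to-$B$ path $P$ of length $t \geq 1$, with endpoints $v_0 \in V(C)$ and $b \in B$. Concatenating a walk of length $j$ along $C$ starting at $v_0$ (for $0 \leq j \leq |V(C)| - 1 - t$) with $P$ produces an $A$-$B$ path of length $j + t$, covering lengths $t, t+1, \ldots, |V(C)| - 1$. The remaining lengths $1, 2, \ldots, t - 1$ must be realized by $A$-$B$ paths not beginning on $C$. Here the hypothesis that $G$ is not bipartite with bipartition $(A,B)$ is crucial: an internal edge of $A$ or of $B$ must exist somewhere, and by the $2$-connectivity extracted from minimum degree $\geq 3$ together with the minimality of the counterexample, such an internal edge can be combined with short linking paths to $B$ or to $A$ to produce the missing short $A$-$B$ paths.

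I expect this last step to be the technically hardest part of the proof, and it likely requires careful subcase analysis on how internal edges of $A$ or $B$ sit relative to $C$ and to $P$. Minimality would then be invoked by deleting or contracting irrelevant parts of $G$ (for instance, subgraphs attached to $C$ only at a single vertex) and applying the inductive hypothesis on a strictly smaller graph. The bipartite exception enters precisely when no internal edge inside $A$ or $B$ can be found, which is exactly the case the statement excludes.
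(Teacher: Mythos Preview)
The paper does not prove this statement; it is imported verbatim from \cite{GHM20} (their Theorem~3.2) and used only as a black-box tool inside the proof of Lemma~\ref{chromaticatmost3}. There is therefore no proof in the present paper to compare your proposal against.

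On its own merits, your proposal is an outline rather than a proof, and two concrete gaps would have to be closed before it could stand. First, you write that $2$-connectivity can be ``extracted from minimum degree $\geq 3$''; this is false (two copies of $K_4$ glued at a single vertex give a connected graph of minimum degree $3$ with a cut-vertex), so any step in your minimality reduction that relies on $2$-connectivity needs an independent justification. Second, in the case $V(C)\subseteq A$ you correctly obtain $A$--$B$ paths of lengths $t,t+1,\ldots,|V(C)|-1$ by concatenating arcs of $C$ with a shortest $C$--$B$ path $P$ of length $t$, but for the missing lengths $1,\ldots,t-1$ you offer only an appeal to ``internal edges'' and the minimality of the counterexample. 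The non-bipartite hypothesis does guarantee an edge inside $A$ or inside $B$, but such an edge does not by itself produce an $A$--$B$ path of any prescribed short length, and your sketch supplies no structural lemma that would convert it into one. Without that mechanism the argument is incomplete precisely at the point you flag as hardest.
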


\begin{theo}[\cite{GHM20} Theorem 4.1]\label{longcycle}
Let $k\geq 3$ be an integer. Let $G$ be a $2$-connected graph of minimum degree at least $k$.
If $G$ is $K_3$-free, then $G$ contains a cycle of length at least $2k+2$, except that $G=K_{k,n}$ for some $n\geq k$.
\end{theo}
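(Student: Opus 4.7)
The plan is to proceed by contradiction. Assume $G$ is $2$-connected, $K_3$-free, satisfies $\delta(G)\ge k$, contains no cycle of length at least $2k+2$, and is not isomorphic to any $K_{k,n}$. I would fix a longest cycle $C$ of $G$ and analyze the structure around $C$ and in $H:=G-V(C)$.

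First I would establish $|V(C)|\ge 2k$ via a classical longest-path/rotation argument: take a longest path $P$ in $G$, note that both endpoints of $P$ have all of their $\ge k$ neighbors on $P$, and use $K_3$-freeness together with the observation that a suitable Hamilton-type configuration on $P$ would either produce a cycle of length $\ge 2k$ or open up a longer path, contradicting the maximality of $P$. This step is the warm-up and does not use $2$-connectivity in a critical way.

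Next I would split into two cases. If $V(C)=V(G)$, then $n\le 2k+1$; for any $v\in V(G)$ the neighborhood $N(v)$ is independent of size $\ge k$, and $K_3$-freeness forces $N(u)\cap N(v)=\emptyset$ whenever $uv\in E(G)$. Iterating this for $v$ and some $u\in N(v)$ turns the partition $(N(v),V(G)\setminus N(v))$ into a rigid bipartition; the bound $\delta(G)\ge k$ then pins $G$ down to $K_{k,k}$ or $K_{k,k+1}$, contradicting the assumption. If instead $V(C)\subsetneq V(G)$, fix $v\in V(H)$ and use $2$-connectivity together with $\delta(v)\ge k$ to build a fan of internally disjoint $v$-to-$C$ paths with endpoints $x_1,\ldots,x_r$ on $C$. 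For consecutive $x_i,x_{i+1}$ on $C$, replacing the arc $C[x_i,x_{i+1}]$ by the detour through $v$ must yield a cycle of length at most $|V(C)|\le 2k+1$, which forces a lower bound on each arc length of $C$; summing these around $C$ bounds $r$ from above, while $\delta(v)\ge k$ together with $K_3$-freeness push $r$ from below. The resulting incompatibility either contradicts the maximality of $C$ or pins the incidence pattern of $v$ with $C$ tightly enough to extend the complete bipartite structure from $\{v\}\cup N_G(v)$ to all of $G$.

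The main obstacle I expect is this fan/rerouting step: producing the rerouted cycles is routine, but converting the arithmetic constraints on arc lengths into the structural conclusion ``$G=K_{k,n}$ or a cycle of length $\ge 2k+2$ exists'' requires careful parity bookkeeping in the borderline cases $|V(C)|\in\{2k,2k+1\}$. A secondary obstacle is handling vertices $v\in V(H)$ that are adjacent only to $C$: one must show that such a $v$, together with its neighborhood on $C$, already determines the bipartition of the whole graph, which means iterating the fan analysis for every off-$C$ vertex and checking that the bipartitions produced by different choices of $v$ are compatible.
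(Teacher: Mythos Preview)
The paper does not contain a proof of this statement. Theorem~\ref{longcycle} is quoted from \cite{GHM20} (as Theorem~4.1 there) and is used as a black box in the proof of Lemma~\ref{chromaticatmost3}; it is introduced with the sentence ``We need the following theorems in \cite{GHM20}'' and no argument for it is given anywhere in the present paper. Consequently there is nothing in this paper to compare your proposal against.

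That said, a brief comment on the proposal itself. The overall strategy---fix a longest cycle, get $|V(C)|\ge 2k$ from a longest-path argument, then split into the Hamiltonian and non-Hamiltonian cases and use fans from off-cycle vertices---is a standard and reasonable line of attack for results of this type. One point that deserves care is your Hamiltonian subcase with $|V(C)|=2k+1$: you write that the analysis ``pins $G$ down to $K_{k,k}$ or $K_{k,k+1}$'', but $K_{k,k+1}$ is bipartite and hence has no Hamilton cycle on $2k+1$ vertices, so in that subcase you actually need to reach a contradiction outright rather than land on $K_{k,k+1}$. The ingredient you need there is that a triangle-free graph on $2k+1$ vertices with minimum degree at least $k$ is bipartite (e.g.\ via Andr\'asfai--Erd\H{o}s--S\'os, since $k/(2k+1)>2/5$ for $k\ge 3$), which clashes with the odd Hamilton cycle. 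In the non-Hamiltonian case your fan/rerouting arithmetic is the right idea, but, as you yourself note, turning the arc-length constraints into the global conclusion $G=K_{k,n}$ is the real work; your sketch does not yet explain how the bipartitions produced by different off-cycle vertices are shown to coincide, and that is where such arguments typically require the most care.
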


\begin{theo}[\cite{GHM20} Theorem 5.2]\label{containK3}
Let $k\geq 2$ be an integer.
Every $2$-connected graph $G$ of minimum degree at least $k$ containing a triangle $K_3$ contains $k$ cycles of consecutive lengths, except that $G=K_{k+1}$.
\end{theo}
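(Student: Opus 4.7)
The plan is to take the triangle $K_3 \subseteq G$ as a starting point (giving a cycle of length $3$) and extend to cycles of lengths $4, 5, \ldots, k+2$, thereby producing $k$ cycles of consecutive lengths.

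First, if $|V(G)| = k+1$, the minimum degree condition forces $G = K_{k+1}$, which is the excepted case, so I may assume $|V(G)| \geq k+2$. I would consider a longest cycle $C$ in $G$; by a classical Dirac-type bound, the $2$-connectedness together with the minimum-degree condition force $|V(C)| \geq \min\{|V(G)|, 2k\} \geq k+2$. The minimum-degree hypothesis guarantees that every vertex of $C$ has at least $k-2$ chord neighbors on $C$ or off-$C$ ear attachments, giving rise to chords and ears at many different distances around $C$. Each chord or ear at distance $d$ on $C$ (for $2 \leq d \leq k+1$) produces a cycle of length $d+1$ via the short arc (or a longer cycle via the long arc); together with the triangle, these should realize every length in $\{3, 4, \ldots, k+2\}$.

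To verify that every such length is indeed realized, I would combine counting (pigeonhole on chord distances) with structural analysis (P\'{o}sa-type rotations to shift chord distances). The triangle is essential here: it seeds the short-cycle end of the spectrum and rules out parity obstructions that would otherwise prevent some of the small lengths from being achieved. When a direct chord of the desired distance is missing, rotations of the longest cycle or ear-modification arguments should supply the missing cycle length.

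The main obstacle is the structural characterization of when some length in $\{3, 4, \ldots, k+2\}$ cannot be realized by the chord/ear analysis: in that situation, one must show the obstruction forces $G = K_{k+1}$ exactly. This is where the $2$-connectedness and minimum-degree hypotheses must be combined in a fine-grained way to rule out all other possibilities. Handling this final structural classification cleanly---ensuring that only $K_{k+1}$ survives the chord-distance analysis, and not some more exotic near-complete structure---is the most delicate step, and is where the triangle hypothesis plays its decisive role.
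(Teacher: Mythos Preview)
The paper does not prove this theorem: it is quoted verbatim from \cite{GHM20} (their Theorem~5.2) and invoked as a black box in the proof of Theorem~\ref{six}. There is therefore no proof in the present paper against which to compare your proposal.

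As for the proposal on its own merits, it is a sketch rather than a proof, and the decisive step is left open. You correctly note that a chord at distance $d$ along a long cycle $C$ yields a cycle of length $d+1$, and that minimum degree $k$ forces each vertex of $C$ to have many chord-neighbours on $C$. But the pigeonhole/rotation argument you allude to does not by itself guarantee a chord (or ear) at \emph{every} distance $2,3,\ldots,k+1$; you yourself flag ``the structural characterization of when some length cannot be realized'' as the main obstacle and do not resolve it. That endgame is the entire content of the theorem, not a detail. Note also that the statement only asks for $k$ cycles of \emph{some} consecutive lengths, not specifically $3,4,\ldots,k+2$; fixing the window to start at $3$ may be aiming at a stronger (and possibly harder) claim than required, and it is not clear how the triangle hypothesis is genuinely exploited beyond providing the single length $3$. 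Without the missing structural analysis, the outline does not constitute a proof.
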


\begin{theo}\label{six}
Every graph of chromatic number six contains cycles all lengths modulo five.
\end{theo}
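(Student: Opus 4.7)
The plan is to reduce to the case of a non-complete $6$-critical graph (from which Theorem~\ref{maintheorem_chromatic} for $k=5$, and hence Theorem~\ref{six} after some additional bookkeeping to handle graphs that merely contain $K_6$, follows). So assume $G$ is non-complete $6$-critical; then $\delta(G) \geq 5$ and $G$ is $2$-connected. I would then split into two cases based on whether $G$ contains a triangle.

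\textbf{Case 1 ($G$ contains a triangle).} Apply Theorem~\ref{containK3} with $k=5$. Since $G \neq K_6$, $G$ contains five cycles of consecutive lengths, and any five consecutive integers cover every residue modulo $5$.

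\textbf{Case 2 ($G$ is $K_3$-free).} Since $\chi(G)=6$, $G$ is non-bipartite, so in particular $G \neq K_{5,n}$. Theorem~\ref{longcycle} with $k=5$ then produces a cycle $C \subseteq G$ of length at least $12$. The remaining task is to extract cycles of every residue modulo $5$ from $C$ together with the extra edges forced by $\delta(G) \geq 5$. My plan is to locate either a chord of $C$ or a non-trivial ear attached to $C$ at two vertices $u,v\in V(C)$, giving a short $u$--$v$ connector $\pi$ (such a structure must exist because $|V(C)|\geq 12$ and $\delta(G)\geq 5$), and then to apply Theorem~\ref{A-B pathmodify} to a non-trivial partition $(A,B)$ of $V(G)$ with $u\in A$ and $v\in B$, chosen so that $G$ is not bipartite with respect to $(A,B)$ (possible because $G$ is non-bipartite). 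Theorem~\ref{A-B pathmodify} then supplies $A$--$B$ paths of every length from $1$ up to $|V(C)|-1\geq 11$; after small surgery along arcs of $C$ to pin the endpoints at $u$ and $v$, concatenating five of these with the short connector $\pi$ yields five cycles of consecutive lengths, hitting every residue modulo $5$.

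The main technical difficulty is in Case 2: Theorem~\ref{A-B pathmodify} fixes only one endpoint per part, so some work is needed to convert its generic $A$--$B$ paths into $u$--$v$ paths of precisely the wanted lengths, while simultaneously avoiding its bipartite exception. This is where arcs of $C$ must be used as slack and parities tracked carefully; a secondary case split depending on whether $C$ has chords or merely admits ears off $C$ is likely required. Lemma~\ref{mainlemma} may also play a supporting role in excluding pathological near-bipartite configurations by producing an opposite pair and hence mod-$4$ cycle information that forbids certain degenerate structures—echoing the strategy of Section~\ref{SECfive} suitably adapted to handle five residues instead of four.
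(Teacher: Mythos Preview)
Your reduction to a non-complete $6$-critical graph and Case~1 are correct and coincide with the paper's opening moves. The gap is entirely in Case~2.

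The paper's argument in the $K_3$-free case is structurally different from yours. It fixes a BFS tree $T$ rooted at some vertex $r$ and works with the level sets $L_i$. The point is that Theorem~\ref{A-B pathmodify} is not applied to $G$ with an ad hoc bipartition, but to a $4$-critical subgraph $H$ of some $G[L_t]$ (which must exist if the theorem fails, else one builds a proper $5$-coloring of $G$ level by level). The bipartition $(A,B)$ of $V(H)$ is induced by the children of the root $r'$ of the minimal subtree $T'$ spanning $V(H)$, and the key feature is that for \emph{every} $a\in A$ and $b\in B$ the tree path $T_{a,b}$ has the same length $2h$. Thus the $A$--$B$ paths of lengths $1,\dots,7$ supplied by Theorem~\ref{A-B pathmodify}---whose endpoints are otherwise uncontrolled---can each be closed by a tree path of identical length, yielding seven cycles of consecutive lengths. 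The rest of the paper (Lemma~\ref{keyclam} and the final coloring) handles the case where every level component has chromatic number at most $3$.

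Your plan has no analogue of this uniform return path. Theorem~\ref{A-B pathmodify} gives you paths $P_1,\dots,P_{11}$ of lengths $1,\dots,11$, but each $P_j$ runs between some pair $(a_j,b_j)$ that varies with $j$; your connector $\pi$ joins only the fixed pair $(u,v)$. The proposed fix, ``surgery along arcs of $C$ to pin the endpoints at $u$ and $v$'', does not work: the $P_j$ live in all of $G$, not on $C$, so the detour arcs $C[a_j,u]$ and $C[b_j,v]$ may intersect $P_j$, and in any case their lengths vary with $j$ in a way you cannot control modulo $5$. Lemma~\ref{mainlemma} is of no help either, as it concerns residues modulo $4$. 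In short, the sentence ``after small surgery\dots'' hides the entire difficulty, and nothing in the proposal indicates how to overcome it; the BFS-tree mechanism the paper uses is precisely what is missing.
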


\begin{proof}
It suffices to consider $6$-critical graphs $G$.
Suppose that $G$ does not contain five cycles of all lengths modulo five.
It is well-known that $G$ is a $2$-connected graph of minimum degree at least five.
By Theorem \ref{containK3}, we may assume that $G$ is $K_3$-free.
Fix a vertex $r$ and let $T$ be the breadth first search tree in $G$ with root $r$.
Let $L_0=\{r\}$ and $L_i$ be the set of vertices of $T$ at distance $i$ from its root $r$.

\begin{lem}\label{chromaticatmost3}
Every component of $G[L_i]$ has chromatic number at most 3, for all $i\geq0$ .
\end{lem}

\begin{proof}
Suppose to the contrary that there exists a component $D$ of $G[L_t]$ which has chromatic number at least $4$ for some $t$.
Let $H$ be a $4$-critical subgraph of $D$.
It is clear that $H$ is a $2$-connected non-bipartite graph of minimum degree at least $3$.
By Theorem \ref{longcycle}, $H$ contains a cycle of length at least $8$.
Let $T'$ be the minimal subtree of $T$ whose set of leaves is precisely $V(H)$, and let $r'$ be the root of $T'$.
Let $h$ denote the distance between $r'$ and vertices in $H$ in $T'$.
Since $G$ is $K_3$-free, $h\geq 2$.
By the minimality of $T'$, $r'$ has at least two children in $T'$.
Let $x$ be one of its children.
Let $A$ be the set of vertices in $H$ which are the descendants of $x$ in $T'$ and let $B=V(H)-A$.
Then both $A,B$ are nonempty and for any $a\in A$ and $b\in B$, $T_{a,b}$ has the same length $2h$.
By Theorem \ref{A-B pathmodify}, there are $7$ subpaths of $H$ from a vertex of $A$ to a vertex of $B$ of length $1,2,\ldots, 7$, respectively.
It follows that $G$ contains $7$ cycles of consecutive lengths, a contradiction.
This completes the proof of Lemma \ref{chromaticatmost3}.
\end{proof}

For a connected graph $D$, a vertex in $D$ is called {\it good} if it is not contained in the minimal connected subgraph of $D$ which contains all $2$-connected blocks of $D$, and {\it bad} otherwise.

\begin{lem}\label{keyclam}
Let $H_1$ be a non-bipartite component of $G[L_i]$ and $H_2$ be a non-bipartite component of $G[L_{i+1}]$ for some $i\geq 1$. If $N_{H_1}(H_2)\neq\emptyset$,
then every vertex in $N_{H_1}(H_2)$ is a good vertex of $H_1$.
\end{lem}

\begin{proof}
Suppose that there exists a bad vertex $v$ of $H_1$ which has a neighbor in $H_2$.
Let $T'$ be the minimal subtree of $T$ whose set of leaves is precisely $V(H_1)$,
and let $r'$ be the root of $T'$.
Let $h$ denote the distance between $r'$ and vertices in $H_1$ in $T'$.
Since $G$ is $K_3$-free, $h \geq 2$.
By the minimality of $T'$, $r'$ has at least two children in $T'$.
Let $(X,Y)$ be a non-trivial partition of all children of $r'$ in $T'$.
Let $A$ be the set of vertices in $H_1$ which are the descendants of $X$ in $T'$ and
let $B$ be the set of vertices in $H_1$ which are the descendants of $Y$ in $T'$.
Note that $(A,B)$ is a non-trivial partition of $V(H_1)$.
Note that every vertex in $B$ is the descendants of $Y$ in $T'$.
Let $A'$ be the set of vertices in $L_i-A$ which are the descendants of $X$ in $T$.
Let $B'$ be the set of vertices in $L_i-B$ which are the descendants of $Y$ in $T$.
Let $M:=L_i-(A\cup A'\cup B\cup B')$.
Note that $A,A',B,B'$ and $M$ form a partition of $L_i$.
Note that every vertex of $H_2$ has a neighbor in $L_i$.

Suppose that there exists a vertex $m\in V(H_2)$ which has a neighbor $m'$ in $M$.
Recall that $H_1$ is non-bipartite and $K_3$-free.
There exists a path $z_1z_2z_3z_4z_5$ of length $4$ in $H_1$ with $z_1=v$.
It is easy to see that $T_{z_i,m}$ contains $r'$ for $i\in[5]$, so they have the same length.
Let $P$ be a fixed path from $u$ to $m$ in $H_2$.
Then $P\cup uz_1z_2\ldots z_i \cup T_{z_i,m'}\cup m'm$, for $i\in[5]$ are $5$ cycles of consecutive lengths in $G$, a contradiction.
Therefore $N_M(H_2)=\emptyset$, that is every vertex in $H_2$ has a neighbor in $A\cup A'\cup B\cup B'$.
For a vertex in $V(H_2)$, we call it {\it type-$A$} if it has a neighbor in $A\cup A'$ and it {\it type-$B$} if it has a neighbor in $B\cup B'$.
\footnote{We remark that a vertex can be both type-$A$ and type-$B$.}

Let $C=v_0v_1\ldots v_n$ be an odd cycle of $H_1$, where $n\geq 4$.
Suppose that $V(C)\subseteq A$.
Since $B$ is non-empty, we choose an arbitrary vertex $b$ in $B$.
Let $b$ be a vertex in $B$.
Since $H_1$ is connected, there exists a path $P$ from $b$ to $V(C)$ internal disjoint from $V(C)$.
Without loss of generality, we assume that $V(P)\cap V(C)=\{v_0\}$.
Then $P\cup C[v_0,v_i]\cup T_{b,v_i}$ for $i=0,1,\ldots, 4$ give $5$ cycles of consecutive lengths, a contradiction.
Therefore, $B\cap V(C)\neq\emptyset$, and similarly, $A\cap V(C)\neq\emptyset$.
Then there must be an $A$-$B$ path of length $4$ in $C$
(otherwise, since $4$ and $|C|$ is co-prime and $|C|\geq 5$, one can deduce that all vertices of $C$ are contained in one of the two parts $A$ and $B$, a contradiction).

Without loss of generality, we may assume that $v_0,v_1\in A$ and $v_2\in B$.
Then $T_{v_1,v_2}\cup v_2v_1$ and $T_{v_0,v_2}\cup v_2v_1v_0$ are two cycles of lengths $2h+1$ and $2h+2$, respectively.
We have showed that there exists some $A$-$B$ path of length $4$ in $C$ which gives a cycle of length $2h+4$,
so we may assume that there is no $A$-$B$ path of length $3$ or $5$ in $C$.
This would force that one of the following holds.

\subsection{There is no $A$-$B$ path of length $3$ in $H_1$.}
This would force that for any path $P'=u_0u_1\ldots u_s$ in $H_1$ with $u_1=v_0,u_2=v_1,u_3=v_2$,
we can derive that $u_j\in B$ if $j\equiv 0$ modulo $3$ and $u_j\in A$ if $j\equiv 1$ or $2$ modulo $3$.
Moreover, we have that $v_{3i}, v_{3i+1}\in A$ and $v_{3i+2}\in B$ for each possible $i\geq 0$.
So $|C|\geq 9$ and $G$ contains a cycle of length $\ell\in\{2h+1,2h+2,2h+4,2h+5,2h+7,2h+8\}$.
In particular, since $H_1$ is connected, for any vertex $b\in B$, there exists a path of length 2 in $H_1$ from $b$ to some vertex in $A$.  And for any bad vertex $a\in A$, there exists a path $b_1aa_1b_2$ satisfying $b_1,b_2\in B$ and $a,a_1\in A$.

\begin{itemize}

\item Suppose that $N_{A\cup A'}(H_2)\neq\emptyset$ and $N_{B\cup B'}(H_2)\neq\emptyset$.
Since $H_2$ is connected and every vertex of $H_2$ has a neighbor in $A\cup A'\cup B\cup B'$,
there exist two adjacent vertices $p,q$ of $H_2$ such that $p$ has a neighbor $p'$ in $A\cup A'$ and $q$ has a neighbor $q'$ in $B\cup B'$.
Then $p'pqq'\cup T_{p',q'}$ is a cycle of length $2h+3$.
It follows that $G$ contains $5$ cycles of lengths $2h+1,2h+2,2h+3,2h+4$ and $2h+5$, respectively, a contradiction.

\item Suppose that $N_{L_i}(H_2)\subseteq B\cup B'$.
Since $N_{A\cup B}(H_2)\neq\emptyset$, we have that $v\in B$.
Let $u$ be any vertex in $N_{H_2}(v)$.
Choose $w_1\in V(H_2)$ such that there exists a path $Q$ of length $2$ from $u$ to $w_1$ in $H_2$.
Let $w_2$ be a neighbor of $w_1$ in $B\cup B'$.
Suppose that $w_2\neq v$.
Note that there is a path $R:=vv''v'$ such that $v''\in B$ and $v'\in A$.
Then $R \cup vu \cup Q \cup w_1w_2\cup T_{w_2,v'}$ is a cycle of length $2h+6$.
So $G$ contains cycles of lengths $2h+4,2h+5,2h+6,2h+7$ and $2h+8$, a contradiction.
Therefore $w_2=v$ and $w_1\in N_{H_2}(v)$. That says, every vertex in $H_2$ of distance $2$ from a neighbor of $v$ is a neighbor of $v$.
Continuing to apply this along with a path from $u$ to an odd cycle $C_0$ in $H_2$,
we could obtain that $v$ is adjacent to all vertices of $C_0$, which contradicts that $G$ is $K_3$-free.
Therefore, $N_{B\cup B'}(H_2)=\emptyset$.

\item Now we see that $N_{L_i}(H_2)\subseteq A\cup A'$.
This forces that $v\in A$.
For any neighbor $u'$ of $v$ in $H_2$,
let $w_3\in V(H_2)$ satisfies that there exists a path $Q'$ of length $2$ from $u'$ to $w_3$ in $H_2$.
Note that $v\in A$ is bad in $H_1$, we can infer that there exists a path $b_2va_1b_1$ in $H_1$ such that $a_1\in A$ and $b_1,b_2\in B$.
Note that $v$ and $a_1$ are symmetric.
Let $w_4$ be a neighbor of $w_3$ in $A\cup A'$.
Suppose that $w_4\notin\{v,a_1\}$.
Then $vu'\cup Q'\cup w_3w_4\cup T_{w_4,b_1}\cup b_1a_1v$ is a cycle of length $2h+6$.
So again, $G$ contains cycles of lengths $2h+4,2h+5,2h+6,2h+7$ and $2h+8$, a contradiction.
Therefore, $w_4\in\{v,a_1\}$. That is, every vertex in $H_2$ of distance $2$ from a neighbor of $v$ or $a_1$ is adjacent to one of $v,a_1$.
Continuing to apply this along with a path from $u'$ to an odd cycle $C_1$ in $H_2$,
we could obtain that every vertex of $C_1$ is adjacent to one of $v,a_1$.
But this would force a $K_3$ in $G$.
This final contradiction completes the proof of this subsection.
\end{itemize}

\subsection{There is an $A$-$B$ path of length $3$ in $H_1$.}
Therefore, we may assume that there is no $A$-$B$ paths of length $5$ in $H_1$.

\begin{cl}
Let $t_1t_2t_3$ be a path in $H_1$ satisfying that $t_1$ and $t_3$ are in different parts.
Then $t_2$ does not have a neighbor in $V(H_2)$.
\end{cl}

\begin{proof}[Proof of Claim]
Without loss of generality, we may assume that $t_1,t_2\in A$ and $t_3\in B$.
Suppose that $t_2$ has a neighbor in $H_2$.
Let $s$ be any vertex in $N_{H_2}(t_2)$.
Choose $s'\in V(H_2)$ such that there exists a path $Q$ of length $2$ from $s$ to $s'$ in $H_2$.
Let $t$ be a neighbor of $s'$ in $L_i-M$.
Suppose that $t\neq t_2$.
If  $t\in A\cup A'$, then $t_3t_2s\cup Q\cup s't\cup T_{t,t_3}$ is a cycle of length $2h+5$.
So $G$ contains cycles of lengths $2h+1,2h+2,2h+3,2h+4$ and $2h+5$, a contradiction.
Therefore $t\in B\cup B'$, then $t_1t_2s\cup Q\cup s't\cup T_{t,t_1}$ is a cycle of length $2h+5$.
So $G$ contains cycles of lengths $2h+1,2h+2,2h+3,2h+4$ and $2h+5$, a contradiction.
Therefore $t=t_2$ and $w_1$ is the neighbor of $t_2$.
That says, every vertex in $H_2$ of distance $2$ from a neighbor of $t_2$ is a neighbor of $t_2$.
Continuing to apply this along with a path from $s$ to an odd cycle $C_2$ in $H_2$,
we could obtain that $t_2$ is adjacent to all vertices of $C_2$, which contradicts that $G$ is $K_3$-free.
This completes the proof of Claim.
\end{proof}

\begin{itemize}
\item Suppose that $N_{A\cup A'}(H_2)\neq\emptyset$ and $N_{B\cup B'}(H_2)\neq\emptyset$.
Suppose that there exists a path $p_0p_1p_2p_3$ in $H_2$ such that $p_0$ is type-$A$ and $p_3$ is type-$B$.
Let $q$ be the neighbor of $p_0$ in $A\cup A'$ and $q'$ be the neighbor of $p_3$ in $B\cup B'$.
Then $qp_0p_1p_2p_3q'\cup T_{q',q}$ is a cycle of length $2h+5$.
So $G$ contains cycles of lengths $2h+1,2h+2,2h+3,2h+4$ and $2h+5$, a contradiction.
This forces that every two vertices which are linked by a path of length $3$ in $H_2$ have the same type.
Note that $N_{A\cup A'}(H_2)\neq\emptyset$ and $N_{B\cup B'}(H_2)\neq\emptyset$.
By symmetry between $A\cup A'$ and $B\cup B'$, there exists a path $z_0z_1z_2$ in $H_2$ such that $z_0$ and $z_1$ are type-$A$ and $z_2$ is type-$B$.
Moreover, for any path $P'':=u_0u_1\ldots u_s$ in $H_2$ with $u_0=z_0,u_1=z_1,u_2=z_2$, we can derive that $u_j$ is type-$A$ if $j\equiv 0$ or $1$ modulo $3$ and $u_j$ is type-$B$ if $j\equiv 2$ modulo $3$.
Moreover, for any path $P''':=u_0u_1\ldots u_s$ in $H_2$ with $u_0=z_2,u_1=z_1,u_2=z_0$, we can derive that $u_j$ is type-$A$ if $j\equiv 2$ modulo $3$ and $u_j$ is type-$B$ if $j\equiv 1$ or $2$ modulo $3$.
This forces that every cycle in $H_2$ has length $0$ modulo $3$.
Since $H_2$ is non-bipartite and $K_3$-free, there is an odd cycle $C_3:=w_0w_1\ldots w_mw_0$ of length at least $9$.
Note that $w_0$ and $w_8$ have different types.
If follows that there is a cycle of length $2h+10$.
So $G$ contains cycles of lengths $2h+1,2h+2,2h+3,2h+4$ and $2h+10$, a contradiction.

\item Therefore, all vertices in $H_2$ have the same type.
Without loss of generality, we may assume that $N_{L_i}(H_2)\subseteq A\cup A'$.
Therefore $v\in A$ and let $f_0$ be a neighbor of $v$ in $H_2$.
Since $H_2$ is $K_3$-free and non-bipartite, there is a path $f_0f_1f_2$ in $H_2$.
Since $H_1$ is a $K_3$-free non-bipartite graph and $v$ is a bad vertex in $H_1$, there is a path $a_0a_1va_2a_3$ in $H_1$.
Since there is no $A$-$B$ path of length $5$ in $H_1$, we have that for any path $Q':=u_0u_1\ldots u_s$ in $H_1$ with $u_0=a_0,u_1=a_1,u_2=v,u_3=a_2,u_4=a_3$, we can derive that $u_j$ and $u_k$ are in the same part if $j\equiv k$ modulo $5$.
Also, we have that for any path $Q':=u_0u_1\ldots u_s$ in $H_1$ with $u_0=a_3,u_1=a_2,u_2=v,u_3=a_1,u_4=a_0$, we can derive that $u_j$ and $u_k$ are in the same part if $j\equiv k$ modulo $5$.
Based on previous analysis, We have that $a_1$ and $a_2$ have the same type.

\begin{itemize}
\item Suppose that $a_1,a_2\in A$.
Since $V(H_1)\cap B\neq\emptyset$, we have that one of $a_0$ and $a_3$ is in $B$.
Without loss of generality, we may assume that $a_0\in B$.
Let $w$ be a neighbor of $f_1$ in $H_1$.
We have that $w\in A\cup A'$.
Since $G$ is $K_3$-free, $w\neq u$.
Note that $a_0a_1v$ satisfying that $a_0$ and $v$ are in different parts of $H_1$.
By Claim, we have that $w\neq a_1$.
Therefore, $wf_1f_0va_1a_0\cup T_{a_0,w}$ is a cycle of length $2h+5$.
So $G$ contains cycles of lengths $2h+1,2h+2,2h+3,2h+4$ and $2h+5$, a contradiction.
\item Therefore, $a_1,a_2\in B$.
Let $w'$ be a neighbor of $v_1$ in $H_1$.
We have that $w'\in A\cup A'$.
Suppose that $w'\neq v$.
Then $w'f_2f_1f_0va_1\cup T_{a_1,w'}$ is a cycle of length $2h+5$.
So $G$ contains cycles of lengths $2h+1,2h+2,2h+3,2h+4$ and $2h+5$, a contradiction.
Therefore $w'=v$.
That says, every vertex in $H_2$ of distance $2$ from a neighbor of $v$ is a neighbor of $v$.
Continuing to apply this along with a path from $f_0$ to an odd cycle $C_4$ in $H_2$,
we could obtain that $v$ is adjacent to all vertices of $C_4$, which contradicts that $G$ is $K_3$-free.
\end{itemize}
\end{itemize}

This completes the proof of Lemma \ref{keyclam}.
\end{proof}

Now, we define a coloring $c:V(G)\rightarrow \{1,2,3,4,5\}$ as following.
Let $D$ be any bipartite component of $G[L_i]$ for some $i$.
If $i$ is even, we color one part of $D$ with color $1$ and the other part with color $2$,
and if $i$ is odd, we color one part of $D$ with color $4$ and the other part with color $5$.
Let $F$ be any non-bipartite component of $G[L_j]$ for some $j$.
If $j$ is even, by using the block structure of $F$, we can properly color $V(F)$ with colors $1,2$ and $3$ by coloring bad vertices with colors $1,2$ and $3$ and coloring good vertices with colors $1$ and $2$.
If $j$ is odd, then we also can properly color $V(F)$ with colors $3, 4$ and $5$ by coloring bad vertices with colors $3,4$ and $5$ and coloring good vertices with colors $4$ and $5$.

Next, we argue that $c$ is a proper coloring on $G$.
Let $H_1$ be a component of $G[L_i]$ and $H_2$ be a component of $G[L_{i+1}]$ for $i\geq0$ such that there exists an edge between $H_1$ and $H_2$.
If one of them is bipartite, then $c$ is proper on $V(H_1)\cup V(H_2)$ .
Therefore, both $H_1$ and $H_2$ are non-bipartite.
By the above claim, all vertices of $H_2$ are not adjacent to vertices of color $3$ in $H_1$.
It follows that $c$ is proper on $V(H_1)\cup V(H_2)$.
Therefore, $c$ is a proper $5$-coloring of $G$, which contradicts that $G$ is $6$-critical.
This completes the proof of Theorem \ref{six}.
\end{proof}

\subsection*{Acknowledgements.}
The author would like to thank Jun Gao and Jie Ma for useful discussions. The author also thanks Jun Gao for carefully reading a draft of this paper.

\end{document}